\renewcommand{\bar}{\overline}
\renewcommand{\phi}{\varphi}
\newcommand{\spec}{\operatorname{Spec}}
\newcommand{\nm}{\operatorname{Nm}}
\newcommand{\disc}{\operatorname{Disc}}
\newcommand{\gal}{\operatorname{Gal}}
\newcommand{\kgal}{K^\text{gal}}
\newcommand{\zz}{\mathbb{Z}}
\newcommand{\qq}{\mathbb{Q}}
\newcommand{\oo}{\mathcal{O}}
\newtheorem{thm}{Theorem}[section]
\newtheorem{lm}[thm]{Lemma}
\newtheorem{prop}[thm]{Proposition}
\theoremstyle{remark}
\newtheorem*{rem*}{Remark}
\begin{document}
\title{Progress Towards Counting $D_5$ Quintic Fields}
\thanks{The authors are grateful for the support of the NSF in funding the Emory 2011 REU. The authors would like to thank our advisor Andrew Yang, as well as Ken Ono for their guidance, useful conversations, improving the quality of exposition of this article, and hosting the REU}

\author{Eric Larson}
\address{Department of Mathematics. Harvard University, Cambridge, MA 02138.}
\email{elarson3@gmail.com}

\author{Larry Rolen}
\address{Department of Mathematics and Computer Science, Emory University, Atlanta, GA 30322.}
\email{larry.rolen@mathcs.emory.edu}

\begin{abstract}
Let  $N(5,D_5,X)$ be the number of quintic number fields whose
Galois closure has Galois group
$D_5$ and whose discriminant is bounded by
$X$. By a conjecture of Malle, we
expect that $N(5,D_5,X)\sim C\cdot X^{\frac{1}{2}}$ for some constant
$C$.
The best known upper bound is $N(5,D_5,X)\ll X^{\frac{3}{4}+\varepsilon}$, and we
show this could be improved by counting points on a certain
variety defined by a norm equation;
computer calculations give strong evidence that this number is
$\ll X^{\frac{2}{3}}$. Finally, we show how such norm equations
can be helpful by reinterpreting an earlier proof of Wong on
upper bounds for $A_4$ quartic fields in terms of a similar norm equation.
\end{abstract}
\maketitle
\section{Introduction and Statement of Results}
Let $K$ be a number field and $G\leq S_n$ a transitive permutation
group on $n$ letters. In order to study the distribution of fields with
given degree and Galois group, we introduce the following counting function:
\[N(d,G,X):=\#\{\text{degree $d$ number fields $K$ with $\gal(\kgal / \qq) \simeq G$ and $\vert D_K\vert\leq X$}\}.\] Here $D_K$ denotes the discriminant of $K$, counting
conjugate fields as one.
Our goal is to study this function for
$d=5$ and $G=D_5$. In \cite{Malle}, Malle conjectured that
\begin{equation}
N(d,G,X)\sim C(G)\cdot X^{a(G)}\cdot\log(X)^{b(G)-1}
\end{equation}
for some constant $C(G)$ and for explicit constants $a(G)$ and $b(G)$, and this has been proven for all
abelian groups $G$. Although this conjecture seems to be close to the truth on the whole, Kl\"{u}ners found a counterexample
when $G=C_3\wr C_2$ by showing that the conjecture predicts the wrong value for $b(G)$ in \cite{Kluners-Counter}. This conjecture has been modified to explain all known counter-examples in \cite{Turkelli}.
\\ \indent We now turn to
the study of $N(5,D_5,X)$. By Malle's conjecture, we expect
that
\begin{equation}
N(5,D_5,X)\buildrel{?}\over{\sim}C\cdot X^{\frac{1}{2}}.
\end{equation}
This question is closely related to
average $5$-parts of class numbers of quadratic fields. In general, let $\ell$ be a prime, $D$ range over fundamental discriminants, and
$r_D:=\operatorname{rk}_{\ell}(\operatorname{Cl}_{\mathbb{Q}(\sqrt{D})}).$ Then the heuristics of Cohen-Lenstra predicts that
the average of $\ell^{r_D}-1$ over all imaginary quadratic fields
is $1$, and the average of $\ell^{r_D}-1$ over all real quadratic
fields is $\ell^{-1}$. 

\indent
In fact, one can show using class field theory  that the Cohen-Lenstra heuristics imply that Malle's conjecture is true for $D_5$ quintic fields. 
Conversely, the best known upper bound for $N(5,D_5,X)$ is proved using the ``trivial''
bound (\cite{Kluners-Dihed}):
\begin{equation}
\ell^{r_D}\leq\#\operatorname{Cl}_{\mathbb{Q}(\sqrt{D})}=O(D^{\frac{1}{2}}\log D).
\end{equation}
This gives $N(5,D_5,X)\ll X^{\frac{3}{4}+\varepsilon}$,
and any improved bound would give non-trivial information on average
$5$-parts of class groups in a similar manner.

In this paper, we
consider a method of point counting on varieties to give upper bounds
on $N(5,D_5,X)$. Our main result is the following:
\begin{thm} \label{mainthm}
To any quintic number field $K$ with Galois group $D_5$,
there corresponds a triple $(A, B, C)$ with $A, B \in \oo_{\qq[\sqrt{5}]}$
and $C \in \zz$, such that
\begin{equation}\nm^{\qq[\sqrt{5}]}_\qq \left(B^2 - 4 \cdot \bar{A} \cdot A^2\right) = 5 \cdot C^2 \end{equation}
and which satisfies the following under any archimedean valuation:
\begin{equation}|A| \ll D_K^{\frac{1}{4}}, \quad |B| \ll D_K^{\frac{3}{8}}, \quad \text{and} \quad |C| \ll D_K^{\frac{3}{4}}.\end{equation}
Conversely, the triple $(A,B,C)$ uniquely determines $K$.
\end{thm}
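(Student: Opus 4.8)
The plan is to translate the statement into Galois theory and representation theory, produce the triple by geometry of numbers, and then verify the norm identity together with injectivity. Let $\tilde K$ be the Galois closure of $K$, so that $\gal(\tilde K/\qq)\cong D_5=\langle\sigma,\tau\mid\sigma^5=\tau^2=1,\ \tau\sigma\tau=\sigma^{-1}\rangle$ with $K=\tilde K^{\langle\tau\rangle}$. The normal subgroup $\langle\sigma\rangle\cong C_5$ has fixed field a quadratic field $F_0=\qq(\sqrt D)$, the quadratic resolvent, and $\tilde K/F_0$ is cyclic of degree $5$ with $\tau$ acting by inversion. The two $2$-dimensional irreducible representations $\rho,\bar\rho$ of $D_5$ are conjugate over $\qq$, and their common character field is $\qq(\sqrt5)$ (the character values involve $2\cos(2\pi/5)=\tfrac{-1+\sqrt5}{2}$); this is the source of the field $\qq[\sqrt5]$ in the statement, and $\gal(\qq(\sqrt5)/\qq)$-conjugation — the bar in the theorem — is precisely what swaps $\rho$ and $\bar\rho$. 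By conductor–discriminant the permutation representation attached to $K$ is $1\oplus\rho\oplus\bar\rho$ and $d_{\tilde K}=d_{F_0}\cdot d_K^2$, so that $|d_{F_0}|\ll D_K^{1/2}$ (equality in the unramified case); in particular $|\sqrt D|\ll D_K^{1/4}$, which already matches the claimed size of $A$.

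To produce $(A,B)$, I realize $\rho$ on a rank-$2$ lattice $\Lambda$ over $\oo_{\qq[\sqrt5]}$ obtained from the idempotent projection of $\oo_{\tilde K}$ (equivalently, from a Kummer generator of $\tilde K/F_0$ descended through $\qq(\sqrt5)\subset\qq(\zeta_5)$). The plan is to read off $A,B\in\oo_{\qq[\sqrt5]}$ as the coordinates of a Minkowski-reduced generator of $\Lambda$ relative to the basis dictated by the $\sigma$- and $\tau$-actions: informally, $A$ records the class of the resolvent ideal, so that $\nm^{\qq[\sqrt5]}_\qq(A)\asymp\sqrt{|D|}$, while $B$ records the Kummer datum of the extension and $C\in\zz$ will record $\sqrt D$ rationally through the discriminant identity below.

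For the norm identity, I interpret $B^2-4\bar A A^2$ as the discriminant of the binary quadratic $\bar A X^2-BX+A^2$ attached to $\rho$; its $\gal(\qq(\sqrt5)/\qq)$-conjugate is the corresponding discriminant for $\bar\rho$, so the product of the two is exactly $\nm^{\qq[\sqrt5]}_\qq\!\left(B^2-4\bar A A^2\right)$. By the dihedral structure this product equals $D$ times a rational square, the extra scalar arising from $\disc(\qq(\sqrt5))=5$; collecting the square part into $C^2$ yields precisely $\nm^{\qq[\sqrt5]}_\qq(B^2-4\bar A A^2)=5C^2$. The lattice $\Lambda$ has covolume controlled by $d_{F_0}$ and the conductor, so Minkowski's convex body theorem furnishes a generator with $|A|\ll D_K^{1/4}$ and $|B|\ll D_K^{3/8}$ at each archimedean place; feeding these into the identity bounds each archimedean factor of $B^2-4\bar A A^2$ by $D_K^{3/4}$, whence $5C^2\ll D_K^{3/2}$ and $|C|\ll D_K^{3/4}$, internally consistent with the three claimed exponents.

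For the converse, the data $(A,B,C)$ reconstructs $D$ and the quadratic $\bar A X^2-BX+A^2$, hence the Kummer datum, hence $\tilde K$ and then $K=\tilde K^{\langle\tau\rangle}$, so the correspondence is injective. I expect the main obstacle to be twofold: making the choice of integral generator of $\Lambda$ canonical (independent of the reduction choices) while simultaneously establishing the sharp archimedean exponents $\tfrac14,\tfrac38,\tfrac34$ rather than merely consistent ones, and handling the ramified primes — especially wild ramification above $5$ — carefully enough that the clean equation $=5C^2$, rather than a version with auxiliary local factors, persists beyond the unramified case.
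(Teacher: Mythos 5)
Your framework correctly identifies \emph{why} $\qq[\sqrt{5}]$ appears (it is the character field of the two-dimensional irreducibles of $D_5$, with $\gal(\qq[\sqrt{5}]/\qq)$ swapping them), but the proposal has a genuine gap: the objects $A$, $B$, $C$ are never actually defined, and consequently none of the three substantive claims --- integrality, the norm identity, and the injectivity --- is proved rather than asserted. ``Coordinates of a Minkowski-reduced generator of $\Lambda$'' is not a construction one can compute with (you concede yourself that the choice is not canonical), and the crucial equation $\nm^{\qq[\sqrt{5}]}_\qq\left(B^2 - 4\cdot\bar{A}\cdot A^2\right) = 5\cdot C^2$ is justified only by the phrase ``by the dihedral structure this product equals $D$ times a rational square.'' That sentence \emph{is} the theorem; it cannot also be its proof. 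Your own closing worry about wild ramification at $5$ and ``auxiliary local factors'' signals that the Kummer-theoretic route would require local analysis you have not done. Finally, the converse direction (``reconstructs the Kummer datum, hence $\tilde{K}$'') hides the real issue: from $(A,B,C)$ one recovers at best an unordered pair of resolvent-type quantities, and ruling out that two distinct fields share a triple takes genuine work.

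The paper avoids all of this by being explicit. Take $\alpha \in \oo_K$ of trace zero with $|\alpha| \ll D_K^{1/8}$ at every archimedean place (Minkowski), let $x_1, \dots, x_5$ be its conjugates, and form the Lagrange resolvents $V_j = \sum_i \zeta^{ij} x_i$. The paper then \emph{defines}
\begin{equation*}
A = V_2 V_3, \qquad B = V_1 V_2^2 + V_3^2 V_4, \qquad C = \tfrac{1}{\sqrt{5}}\,(V_1 V_2^2 - V_3^2 V_4)(V_2 V_4^2 - V_1^2 V_3).
\end{equation*}
With these formulas everything is short: $D_5$-invariance and integrality are immediate Galois theory in $\qq[\zeta]$; the norm identity reduces to the polynomial identity $B^2 - 4\cdot\bar{A}\cdot A^2 = (V_1V_2^2 - V_3^2V_4)^2$, which is exact over $\zz$ and hence involves no local factors and no ramification issues at all; the exponents $\tfrac14, \tfrac38, \tfrac34$ are simply $\tfrac18$ times the degrees $2, 3, 6$ of $A, B, C$ in the $V_j$ (so they are not produced by a covolume estimate, which is why your lattice argument has no source for them); and injectivity is a counting argument --- at most ten quadruples $(V_1,V_2,V_3,V_4)$ map to a given triple, and each field already accounts for ten via the $D_5$-action. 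The missing idea in your writeup is precisely this passage from the abstract representation $\rho$ to explicit resolvent polynomials in the conjugates of a small integral generator; without it, the identity $=5\cdot C^2$ has no proof and the stated exponents have no derivation.
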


In Section~\ref{sec:num}, we further provide numerical evidence that
$N(5,D_5,X)\ll X^{\frac{2}{3}+\alpha}$ for very small $\alpha$; in particular the exponent appears to be much lower than $\frac{3}{4}$. 
\\ \indent 
Before we prove Theorem \ref{mainthm}, we show that earlier work of Wong \cite{Wong} in the case of $G=A_4$ can be handled in a similar fashion. Namely, we give a shorter proof of the following theorem:
\begin{thm}[Wong]\label{WongThm}
To any quartic number field $K$ with Galois group $A_4$, there corresponds a tuple $(a_2,a_3,a_4,y)\in\mathbb{Z}^4$, such that \[(4a_2^2 + 48a_4)^3 = \nm^{\qq[\sqrt{-3}]}_{\qq} \left(32 a_2^3 + 108 a_3^2 - 6 a_2 (4a_2^2 + 48 a_4) - 12\sqrt{-3} y\right),\] and which satisfies the following under any archimedean valuation:
\[|a_2|\ll D_K^{\frac{1}{3}},\ |a_3|\ll D_K^{\frac{1}{2}},\ |a_4|\ll D_K^{\frac{2}{3}},\text{ and } |y|\ll D_K.\] Conversely, given such a tuple, there corresponds at most one $A_4$-quartic field. In particular, we have that $N(4,A_4,X)\ll X^{\frac{5}{6}+\varepsilon}$.
\end{thm}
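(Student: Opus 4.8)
The plan is to realize each $A_4$-quartic field by a well-chosen integral generator, read the tuple off from the invariant theory of its minimal polynomial, and then bound the number of admissible tuples by a point count. First I would produce the generator together with the coefficient bounds. Since $A_4$ has no subgroup strictly between a point-stabilizer $C_3$ and $A_4$ itself, $K$ has no proper intermediate subfield, so every element of $K\setminus\qq$ generates $K$. Consider the trace-zero lattice $\oo_K^0=\{x\in\oo_K:\operatorname{Tr}_{K/\qq}(x)=0\}$, which has rank $3$ and covolume $\asymp D_K^{1/2}$ under the Minkowski embedding $x\mapsto(x^{(1)},\dots,x^{(4)})$. Applying Minkowski's convex-body theorem to the symmetric body $\{\sum_j|x^{(j)}|^2\le t\}$ yields a nonzero $\theta\in\oo_K^0$ with $\max_j|\theta^{(j)}|\ll D_K^{1/6}$; as $\operatorname{Tr}\theta=0$ we have $\theta\notin\qq$, so $\theta$ generates $K$. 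Its minimal polynomial is $x^4+a_2x^2+a_3x+a_4$ with $a_i\in\zz$ (the cubic term vanishes because $\operatorname{Tr}\theta=0$), and since each $a_i$ is, up to sign, the $i$-th elementary symmetric function of the conjugates, $|a_i|\le\binom{4}{i}\max_j|\theta^{(j)}|^i\ll D_K^{i/6}$, which is exactly the asserted bound on $a_2,a_3,a_4$.

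Next I would extract the norm equation. Because $\gal(\kgal/\qq)=A_4\subseteq A_4$, the discriminant of $f=x^4+a_2x^2+a_3x+a_4$ is a square in $\qq$; as $\disc(f)\in\zz$ this forces $\disc(f)=y^2$ for some $y\in\zz$. Writing $P=4a_2^2+48a_4$ and $R=32a_2^3+108a_3^2-6a_2P$, one checks these equal $4I$ and $4J$, where $I$ and $J$ are the standard quadratic and cubic invariants of the quartic obeying $4I^3-J^2=27\,\disc(f)$; rescaling gives the polynomial identity
\[P^3-R^2=432\,\disc(f).\]
Since $432=144\cdot 3=\nm^{\qq[\sqrt{-3}]}_\qq(12\sqrt{-3})$, substituting $\disc(f)=y^2$ yields
\[P^3=R^2+432\,y^2=\nm^{\qq[\sqrt{-3}]}_\qq\!\left(R-12\sqrt{-3}\,y\right),\]
which is the claimed equation (with $R$ the real part of the norm argument). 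The bound $|y|\ll D_K$ follows from $y^2=\disc(f)=\prod_{i<j}(\theta^{(i)}-\theta^{(j)})^2$ together with $|\theta^{(i)}-\theta^{(j)}|\ll D_K^{1/6}$, so that $|\disc(f)|\ll D_K^{2}$. For the converse, a tuple satisfying the equation recovers $f$ and hence the single field $\qq[x]/(f)$, and the equation forces $\disc(f)=y^2$ to be a square, so $\gal(f)\subseteq A_4$; thus distinct fields yield distinct tuples and $K\mapsto(a_2,a_3,a_4,y)$ is injective.

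It then remains to count admissible tuples. Since $y$ is determined up to sign by $(a_2,a_3,a_4)$, it suffices to bound the number of triples in the box for which $\disc(f)$ is a perfect square. Fixing $a_2$ ($\ll X^{1/3}$ choices) and $a_3$ ($\ll X^{1/2}$ choices), both $P$ and $R$ are linear in $a_4$, so the condition $432\,y^2=P^3-R^2$ cuts out a cubic curve $z^2=h_{a_2,a_3}(a_4)$ in the variables $(a_4,y)$; an admissible $a_4$ is precisely the abscissa of an integral point on this curve with $|a_4|\ll X^{2/3}$ and $|y|\ll X$. If the number of such integral points is $\ll X^{\varepsilon}$ uniformly in $(a_2,a_3)$, then summing over $(a_2,a_3)$ gives $N(4,A_4,X)\ll X^{1/3}\cdot X^{1/2}\cdot X^{\varepsilon}=X^{5/6+\varepsilon}$.

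The main obstacle is exactly this uniform bound on integral points of bounded height for a family of elliptic curves whose coefficients have size $X^{O(1)}$. I do not expect the generic determinant method of Bombieri–Pila to suffice here: applied curve-by-curve in the box $|a_4|\ll X^{2/3}$ it yields only $\ll (X^{2/3})^{1/3+\varepsilon}=X^{2/9+\varepsilon}$ points, and $X^{1/3+1/2+2/9+\varepsilon}$ exceeds $X^{5/6}$. Instead I would invoke a bound on integral points through the Mordell–Weil group, of the shape (number of integral points) $\ll C^{\,\omega(\Delta)+\operatorname{rank}(E)}$; since both $\omega(\Delta)$ and $\operatorname{rank}(E)$ are $\ll \log X/\log\log X$ for curves of discriminant $\ll X^{O(1)}$, this is $\ll X^{\varepsilon}$. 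Carrying out this estimate genuinely uniformly over the two-parameter family indexed by $(a_2,a_3)$ is the crux, and it is precisely here that the norm-equation packaging pays off, by presenting the $A_4$-condition as a clean family of cubic curves to which such point counts apply.
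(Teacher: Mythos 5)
Your construction of the tuple is correct and is essentially the paper's: the Minkowski argument producing a trace-zero generator with $|a_i|\ll D_K^{i/6}$, the identity $P^3-R^2=432\,\disc(f)$ for $P=4a_2^2+48a_4$ and $R=32a_2^3+108a_3^2-6a_2P$, the resulting norm equation~\eqref{wongeqn}, and the (trivial) injectivity of $K\mapsto(a_2,a_3,a_4,y)$ all match. The genuine gap is in your final counting step, and it is fatal to the \emph{unconditional} claim $N(4,A_4,X)\ll X^{5/6+\varepsilon}$. By fixing $(a_2,a_3)$ (about $X^{5/6}$ choices) you are forced to show that each cubic curve $432y^2=h_{a_2,a_3}(a_4)$ has $\ll X^{\varepsilon}$ integral points in the box, uniformly over the whole family. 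The tool you invoke --- a bound of the shape $C^{\omega(\Delta)+\operatorname{rank}(E)}$ combined with $\operatorname{rank}(E)\ll \log X/\log\log X$ --- does not exist unconditionally: rank bounds of that quality (Mestre) require GRH together with BSD-type input to pass from analytic to algebraic rank. Unconditionally, descent only gives $\operatorname{rank}(E)\ll\log\Delta\ll\log X$, so $C^{\operatorname{rank}(E)}$ is a fixed \emph{positive power} of $X$, not $X^{\varepsilon}$. Thus your argument yields only a conditional statement (essentially the conditional refinement Wong himself discusses), not the theorem as stated.

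The unconditional proof slices the count the other way, and this is exactly what the norm equation is for --- note that in your own count the norm equation is never actually used, since your curve in $(a_4,y)$ is nothing but the square-discriminant condition. The paper fixes $P=4a_2^2+48a_4$, which takes $\ll X^{2/3}$ values; since the ring of integers of $\qq[\sqrt{-3}]$ is a PID, the divisor bound shows that $P^3$ is the norm of $\ll X^{\varepsilon}$ elements, so there are $\ll X^{\varepsilon}$ admissible pairs $(R,y)$ for each $P$. Each such choice defines the affine cubic curve
\begin{equation*}
108a_3^2 \;=\; R+6Pa_2-32a_2^3
\end{equation*}
in the variables $(a_2,a_3)$, and here one does \emph{not} need $X^{\varepsilon}$ points per curve: the uniform, unconditional Bombieri--Pila/Heath-Brown bound (Theorem~3 of \cite{Heath-Brown}) gives $\ll B^{1/3+\varepsilon}$ integral points in a box of side $B=X^{1/2}$, i.e.\ $\ll X^{1/6+\varepsilon}$ per curve, with the implied constant independent of $(P,R)$. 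Since $(a_2,a_3,P)$ determines $a_4$, the total is $X^{2/3}\cdot X^{\varepsilon}\cdot X^{1/6+\varepsilon}=X^{5/6+\varepsilon}$. The contrast with your slicing is instructive: in your direction the determinant method gives only $X^{2/9+\varepsilon}$ (as you computed), which is too weak, whereas the norm-equation decomposition moves the curve-counting to a slice where the $X^{1/6+\varepsilon}$ bound suffices. If you reorganize your last step along these lines, your write-up becomes a complete proof.
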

\section{Upper Bounds via Point Counting}\label{sec:varieties}

Let $G$ be a transitive permutation group.
If $K$ is a number field  of discriminant $D_K$ and degree $n$ for which
$\gal(\kgal / \qq) \simeq G$,
then Minkowski theory implies there is an element
$\alpha \in \oo_K$ of trace zero with
\[|\alpha| \ll D_K^{\frac{1}{2(n - 1)}} \quad \text{(under any archimedean valuation),}\]
where the implied constant depends only on $n$.
In particular, if $K$ is a primitive extension of $\qq$,
then $K = \qq(\alpha)$, so the characteristic polynomial
of $\alpha$ will determine $K$.
One can use this to give an upper bound on $N(n, G, X)$
(at least in the case where $K$ is primitive), since
every pair $(K, \alpha)$ as above gives a $\zz$-point
of $\spec \qq[x_1, x_2, \ldots, x_n]^G / (s_1)$,
where $s_1 = x_1 + x_2 + \cdots + x_n$ (here $\qq[x_1,x_2,\ldots,x_n]^G$ denotes the ring of $G$-invariant polynomials in $\qq[x_1,x_2,\ldots,x_n]$).

\section{Proof of Theorem \ref{WongThm}}

In this section, we sketch a simplified
(although essentially equivalent) version
of Wong's proof \cite{Wong} that
$N(4, A_4, X)\ll X^{\frac{5}{6} + \epsilon}$ as motivation for our main theorem. In this section,
we assume that the reader is familiar with the arguments in Wong's paper.
As noted in the last section, it suffices to count triples $(a_2, a_3, a_4)$
for which $|a_k| \ll X^{\frac{k}{6}}$ under any archimedean valuation and
\[256 a_4^3 - 128 a_2^2 a_4^2 + (16 a_2^4 + 144 a_2 a_3^2) a_4 - 4 a_2^3 a_3^2 - 27 a_3^4 = \disc(x^4 + a_2 x^2 + a_3 x + a_4) = y^2\]
for some $y\in\zz$. (See equation~4.2 of \cite{Wong}.)

The key
observation of Wong's paper (although he
does not state it in this way)
is that this equation can be rearranged as
\begin{equation} \label{wongeqn}
(4a_2^2 + 48a_4)^3 = \nm^{\qq[\sqrt{-3}]}_{\qq} \left(32 a_2^3 + 108 a_3^2 - 6 a_2 (4a_2^2 + 48 a_4) - 12\sqrt{-3} y\right).
\end{equation}

One now notes that there are $\ll X^{\frac{2}{3}}$ possibilities
for $4a_2^2 + 48a_4$, and for each of these choices
$(4a_2^2 + 48a_4)^3$ can be written in $\ll X^\varepsilon$
ways as a norm of an element of $\qq[\sqrt{-3}]$.
Thus, it suffices to count the number of points
$(a_2, a_3)$ for which
\[32 a_2^3 + 108 a_3^2 - 6 a_2 (4a_2^2 + 48 a_4) - 12\sqrt{-3} y \quad \text{and} \quad 4a_2^2 + 48 a_4\]
are fixed. But the above equation defines
an elliptic curve, on which the number of integral points
can be bounded by Theorem 3 in \cite{Heath-Brown}. This then gives Wong's bound (as well as the conditional bound assuming standard conjectures as Wong shows).

\section{Proof of Theorem~\ref{mainthm}}

In this section, we give the proof of Theorem~\ref{mainthm}.
As explained in Section~\ref{sec:varieties},
it suffices to understand the $\zz$-points of
\[\spec \qq[x_1, x_2, x_3, x_4, x_5]^{D_5} / (x_1 + x_2 + x_3 + x_4 + x_5)\]
inside a particular box. Write $\zeta$ for a primitive
$5$th root of unity, and define
\[V_j = \sum_{i = 1}^5 \zeta^{ij} x_i.\]
In terms of the $V_j$, we define
\begin{align*}
A &= V_2 \cdot V_3 \\
B &= V_1 \cdot V_2^2 + V_3^2 \cdot V_4 \\
C &= \frac{1}{\sqrt{5}} \cdot (V_1 \cdot V_2^2 - V_3^2 \cdot V_4) \cdot (V_2 \cdot V_4^2 - V_1^2 \cdot V_3).
\end{align*}

\begin{lm} The expressions $A$, $B$, and $C$ are invariant
under the action of $D_5$.
\end{lm}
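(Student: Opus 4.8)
The plan is to exploit the fact that $D_5$ is generated by just two elements, so that invariance need only be checked on generators. Realize $D_5 \le S_5$ as the symmetries of a pentagon acting on the index set $\{1,\dots,5\}\cong\zz/5\zz$, generated by a rotation $r\colon i\mapsto i+1$ and a reflection $s\colon i\mapsto -i$ (indices mod $5$). The first step is to record how $r$ and $s$ act on the linear forms $V_j$. Using the convention that a permutation $\sigma$ acts by $\sigma\cdot x_i = x_{\sigma(i)}$, one substitutes into $V_j=\sum_i \zeta^{ij}x_i$ and reindexes to obtain that $r$ acts diagonally, $r\cdot V_j = \zeta^{-j} V_j$, while $s$ merely permutes the forms, $s\cdot V_j = V_{-j}$; in particular $s$ interchanges $V_1\leftrightarrow V_4$ and $V_2\leftrightarrow V_3$ (and fixes $V_0=s_1$, which is $0$ in the quotient). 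The sign in the exponent for $r$ depends on the chosen convention, but this will turn out to be immaterial.

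Granting these two formulas, the rest is short monomial bookkeeping. Under $r$, a monomial $\prod_k V_{j_k}$ is scaled by $\zeta^{-\sum_k j_k}$, so it is $r$-invariant exactly when its total weight $\sum_k j_k \equiv 0 \pmod 5$. I would verify this monomial by monomial: $V_2V_3$ has weight $5$; $V_1V_2^2$ and $V_1^2V_3$ have weight $5$; and $V_3^2V_4$ and $V_2V_4^2$ have weight $10$. Since every monomial occurring in $A$, $B$, and the two factors of $C$ has weight divisible by $5$, all three expressions are $r$-invariant (regardless of the sign convention above). Under $s$, I would apply the swap directly: $A=V_2V_3$ is visibly symmetric, and the two summands of $B=V_1V_2^2+V_3^2V_4$ are exchanged, so both are fixed. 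The crucial point for $C$ is that each of its two factors $V_1V_2^2-V_3^2V_4$ and $V_2V_4^2-V_1^2V_3$ is \emph{anti}symmetric under $s$, i.e.\ each is sent to its negative, so their product is fixed; the scalar $1/\sqrt5$ is a constant fixed by $D_5$ and does not affect invariance.

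The main obstacle is not any one deep step but pinning down the two action formulas precisely, since the entire argument rests on them and a sign or index error would be fatal. The one genuinely substantive observation is the structural reason $C$ works: it is engineered as a product of two $s$-antisymmetric factors, so that the two sign changes cancel and the result is $s$-invariant. This is presumably the point of defining $C$ with that particular product shape and the $1/\sqrt5$ normalization, which anticipates the norm equation of Theorem~\ref{mainthm}. Once the formulas $r\cdot V_j=\zeta^{-j}V_j$ and $s\cdot V_j=V_{-j}$ are in hand, invariance under the full group $D_5=\langle r,s\rangle$ follows immediately, completing the proof.
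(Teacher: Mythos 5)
Your proposal is correct and takes essentially the same approach as the paper: the paper's proof likewise observes that the generators of $D_5$ act by $V_j \mapsto \zeta^j V_j$ (rotation, up to the same immaterial sign convention) and $V_j \mapsto V_{5-j}$ (reflection), and then declares the invariance immediate. Your write-up simply makes explicit the monomial-weight check for the rotation and the antisymmetry cancellation in the two factors of $C$ under the reflection, which is exactly the verification the paper leaves to the reader.
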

\begin{proof}
Note that the generators of $D_5$ act by $V_j \mapsto V_{5 - j}$
and $V_j \mapsto \zeta^j V_j$; the result follows immediately.
\end{proof}

\begin{lm} We have $A, B \in \oo_{\qq[\sqrt{5}]}$ and $C \in \zz$.
\end{lm}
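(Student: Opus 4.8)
The plan is to evaluate $A$, $B$, $C$ at the conjugates $x_1, \dots, x_5 \in \kgal$ of a trace-zero algebraic integer $\alpha \in \oo_K$, so that each $V_j = \sum_i \zeta^{ij} x_i$, and hence each of $A$, $B$, $C$, is an element of the compositum $L := \kgal(\zeta)$. For each of the three quantities there are two separate things to verify: that its \emph{value} lies in the correct subfield ($\qq[\sqrt5]$ for $A$ and $B$, and $\qq$ for $C$), and that it is an algebraic integer. I would obtain the subfield statement by Galois descent, combining the preceding lemma with the explicit action of $\gal(\qq(\zeta)/\qq)$ on the $V_j$; the integrality statements are elementary except for the factor $1/\sqrt5$ appearing in $C$, which I expect to be the only genuine obstacle.

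For the descent the key observation is that every $\tau \in \gal(L/\qq)$ acts on any polynomial in the $V_j$ and $\sqrt5$ as a composite of two commuting operations: the permutation of the $x_i$ given by $\tau|_{\kgal} \in D_5$, carried out with $\zeta$ held fixed, followed by the coefficient substitution $\zeta \mapsto \zeta^c$ given by $\tau|_{\qq(\zeta)}$, carried out with the $x_i$ held fixed. Indeed $\tau(V_j) = \sum_i \tau(\zeta)^{ij}\, \tau(x_i)$ factors in exactly this way. By the preceding lemma the $D_5$-permutation fixes each of $A$, $B$, $C$, so membership in the desired subfield reduces to invariance under the relevant coefficient substitutions. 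The substitution fixing $\qq[\sqrt5]$ is $\zeta \mapsto \zeta^{-1}$, equivalently $V_j \mapsto V_{-j}$, under which $A = V_2 V_3 \mapsto V_3 V_2$ and $B = V_1 V_2^2 + V_3^2 V_4 \mapsto V_4 V_3^2 + V_2^2 V_1$ are unchanged; hence $A, B \in \qq[\sqrt5]$. For $C$ I would instead check invariance under the generator $\sigma \colon \zeta \mapsto \zeta^2$: since $\sigma$ sends $\sqrt5 \mapsto \left(\tfrac{2}{5}\right)\sqrt5 = -\sqrt5$ and permutes the $V_j$ cyclically as $V_1 \mapsto V_2 \mapsto V_4 \mapsto V_3 \mapsto V_1$, it carries the first cubic factor $V_1 V_2^2 - V_3^2 V_4$ to the second factor $V_2 V_4^2 - V_1^2 V_3$ and the second to the negative of the first, so the product of the two factors is negated; as $1/\sqrt5$ is negated as well, the signs cancel and $C$ is fixed. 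Hence $C \in \qq$.

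It remains to address integrality. Since each $V_j$ is a $\zz[\zeta]$-linear combination of the algebraic integers $x_i$, the quantities $A = V_2 V_3$ and $B = V_1 V_2^2 + V_3^2 V_4$ are algebraic integers, and being elements of $\qq[\sqrt5]$ they lie in $\oo_{\qq[\sqrt5]}$. The delicate point, which I expect to be the crux of the argument, is that $C$ carries the factor $1/\sqrt5$. Here I would exploit the trace-zero hypothesis: modulo the ramified prime $(1 - \zeta)$ we have $\zeta \equiv 1$, so $V_j \equiv \sum_i x_i = \operatorname{Tr}(\alpha) = 0$, i.e.\ every $V_j$ is divisible by $(1 - \zeta)$. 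Consequently each of the two cubic factors of $C$ lies in $(1 - \zeta)^3$, and since $(\sqrt5) = (1 - \zeta)^2$ in $\zz[\zeta]$ their product is divisible by $\sqrt5$; thus $C = \tfrac{1}{\sqrt5}(V_1 V_2^2 - V_3^2 V_4)(V_2 V_4^2 - V_1^2 V_3)$ is an algebraic integer. Together with $C \in \qq$ this gives $C \in \zz$, completing the proof.
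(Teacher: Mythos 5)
Your proof is correct, and its Galois-descent half is exactly the paper's argument (made more explicit): the paper likewise reduces membership in $\qq[\sqrt{5}]$, resp.\ $\qq$, to invariance under the coefficient substitutions $\zeta \mapsto \zeta^{-1}$ (i.e.\ $V_j \mapsto V_{5-j}$), resp.\ $\zeta \mapsto \zeta^2$, with the $D_5$-part handled by the preceding lemma. Where you genuinely diverge is in how the factor $1/\sqrt{5}$ in $C$ is absorbed. The paper never localizes at the ramified prime: it works with $C\sqrt{5} = (V_1V_2^2 - V_3^2V_4)(V_2V_4^2 - V_1^2V_3)$, which is manifestly an algebraic integer, lies in $\qq[\sqrt{5}]$, and is sent to its negative by $\zeta \mapsto \zeta^2$, i.e.\ is anti-invariant under $\sqrt{5} \mapsto -\sqrt{5}$; since the anti-invariant algebraic integers of $\qq[\sqrt{5}]$ are precisely $\zz\sqrt{5}$ (write such an element as $(m+n\sqrt{5})/2$ with $m \equiv n \pmod{2}$; anti-invariance forces $m=0$, hence $n$ even), it follows at once that $C \in \zz$. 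You instead prove integrality of $C$ directly by a local computation at $(1-\zeta)$, using $V_j \equiv \operatorname{Tr}(\alpha) = 0 \pmod{(1-\zeta)}$ so that the sextic numerator lies in $(1-\zeta)^6 \subseteq (1-\zeta)^2 = (\sqrt{5})$. Both routes are sound. The paper's buys economy: divisibility by $\sqrt{5}$ is forced purely by the sign symmetry, with no ramification theory and no appeal to the trace-zero condition. Yours buys extra information: you in fact get divisibility of the numerator by $(1-\zeta)^6$ rather than the needed $(1-\zeta)^2$, hence $C \in (1-\zeta)^4\oo_{\kgal(\zeta)} \cap \zz$, i.e.\ $5 \mid C$, which the paper's argument does not reveal. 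One small remark: your appeal to the trace-zero hypothesis is not actually necessary, since each cubic factor is a difference of two cubic monomials in the $V_j$ and is therefore congruent to $\operatorname{Tr}(\alpha)^3 - \operatorname{Tr}(\alpha)^3 = 0$ modulo $(1-\zeta)$ in any case.
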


\begin{proof}
To see the first assertion, it suffices to show that
$A$ and $B$ are invariant by the element of $\gal(\qq[\zeta] / \qq)$
given by $\zeta \mapsto \zeta^{-1}$. But this induces
the map $V_j \mapsto V_{5 - j}$, so this is clear.

To see that $C$ is in $\zz$, we observe that the generator
of $\gal(\qq[\zeta] / \qq)$ given by $\zeta \mapsto \zeta^2$
acts by $C \sqrt{5} \mapsto -C \sqrt{5}$.
Since $C\sqrt{5}$ is an algebraic integer,
it follows that $C\sqrt{5}$ must be a rational integer times $\sqrt{5}$,
so $C \in \zz$.
\end{proof}

\noindent
Now, we compute
\[B^2 - 4 \cdot \bar{A} \cdot A^2 = (V_1 \cdot V_2^2 + V_3^2 \cdot V_4)^2 - 4 \cdot V_1 \cdot V_4 \cdot (V_2 \cdot V_3)^2 = (V_1 \cdot V_2^2 - V_3^2 \cdot V_4)^2.\]
Therefore,
\[\nm^{\qq[\sqrt{5}]}_\qq \left(B^2 - 4 \cdot \bar{A} \cdot A^2\right) = (V_1 \cdot V_2^2 - V_3^2 \cdot V_4)^2 \cdot (V_2 \cdot V_4^2 - V_1^2 \cdot V_3)^2 = 5 \cdot C^2,\]
which verifies the identity claimed in Theorem~\ref{mainthm}.

To finish the proof of Theorem~\ref{mainthm}, it remains
to show that to each triple $(A, B, C)$, there corresponds
at most one $D_5$-quintic field. To do this, we begin
with the following lemma.

\begin{lm} \label{nonzero} None of the $V_j$ are zero.
\end{lm}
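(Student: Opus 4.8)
The plan is to exploit the fact that the four quantities $V_1,V_2,V_3,V_4$ are permuted transitively, up to nonzero scalar factors, by the absolute Galois action, so that they must either all vanish or all be nonzero; ruling out the former is then easy. First I would record that
\[ V_0 = \sum_{i=1}^5 x_i = \operatorname{Tr}_{K/\qq}(\alpha) = 0, \]
since $\alpha$ was chosen of trace zero. The linear map $(x_1,\dots,x_5)\mapsto(V_0,\dots,V_4)$ is the discrete Fourier transform, hence invertible, so it will suffice to derive a contradiction from the assumption that some $V_j$ with $1\le j\le 4$ vanishes, after first upgrading this to the vanishing of \emph{all} such $V_j$.

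To establish the transitivity, I would label the roots by $\zz/5$ so that $\gal(\kgal/\qq)\simeq D_5$ acts by the affine maps $i\mapsto\varepsilon i+c$ with $\varepsilon\in\{\pm1\}$ and $c\in\zz/5$. Since the restriction map $\gal(\kgal(\zeta)/\qq)\to\gal(\qq(\zeta)/\qq)$ is surjective, I may choose a lift $g\in\gal(\kgal(\zeta)/\qq)$ of the automorphism $\zeta\mapsto\zeta^2$, and I let $\pi_g(i)=\varepsilon i+c$ denote the permutation of the roots that $g$ induces. A short reindexing (using $\varepsilon^{-1}=\varepsilon$) then gives
\[ g(V_j) = \sum_{i} \zeta^{2ij}\,x_{\pi_g(i)} = \zeta^{-2\varepsilon c j}\,V_{2\varepsilon j}, \]
so $g$ carries $V_j$ to a nonzero (root-of-unity) multiple of $V_{2\varepsilon j}$. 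Because $2\varepsilon\in\{2,-2\}$ generates $(\zz/5)^{\times}$, iterating $g$ sends $V_1$ through each of $V_2,V_3,V_4$ up to such scalars; and since $g$ is a field automorphism it preserves vanishing, so indeed either all of $V_1,V_2,V_3,V_4$ are zero or none are.

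Finally I would conclude: were all four to vanish, then together with $V_0=0$ the inverse Fourier transform would force every $x_i=0$, whence $\alpha=0$, contradicting the fact that $K=\qq(\alpha)$ has degree $5$. Hence none of the $V_j$ is zero. I expect the one genuinely delicate point to be the passage from the \emph{formal} action $\zeta\mapsto\zeta^2$, $V_j\mapsto V_{2j}$ (with the $x_i$ held fixed) used in the preceding lemmas to an \emph{honest} field automorphism of $\kgal(\zeta)$: because the quadratic subfield of $\kgal$ may coincide with $\qq(\sqrt5)\subset\qq(\zeta)$, one cannot in general fix the roots while sending $\zeta\mapsto\zeta^2$, so the lift $g$ must be allowed to permute the $x_i$. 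Tracking the resulting scalar $\zeta^{-2\varepsilon c j}$, and observing that it never annihilates $V_{2\varepsilon j}$, is the crux of making the transitivity argument rigorous.
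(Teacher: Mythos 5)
Your proof is correct, but it takes a genuinely different route from the paper's. The paper never leaves the invariant algebra: assuming some $V_j = 0$, it notes $\bar{A}\cdot A^2 = V_1V_2^2V_3^2V_4 = 0$, so the norm equation degenerates to $\nm^{\qq[\sqrt{5}]}_{\qq}(B^2) = 5C^2$, forcing $B = C = 0$; the sum/product relations (and their conjugates, using $\bar{B}=V_2V_4^2+V_1^2V_3$) then force every pairwise product $V_iV_j$, $i \neq j$, to vanish, so at most one $V_k$ is nonzero; inverting the Vandermonde pins the roots down to $x_i = \zeta^{-ik}c$, i.e.\ minimal polynomial $t^5 - c^5$, which is ``visibly not a $D_5$ extension'' (its splitting field contains $\qq(\zeta)$, so its Galois group surjects onto $C_4$, while $D_5$ has abelianization $C_2$). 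You instead invoke the honest Galois action on $\kgal(\zeta)$: your formula $g(V_j) = \zeta^{-2\varepsilon cj}V_{2\varepsilon j}$ for a lift $g$ of $\zeta \mapsto \zeta^2$ is correct, the restriction map you use is indeed surjective, and since $2\varepsilon$ generates $(\zz/5)^{\times}$, vanishing among $V_1,\dots,V_4$ is all-or-nothing; total vanishing together with $V_0 = 0$ would force $\alpha = 0$, which is impossible since $\alpha$ generates a quintic field. Each approach buys something. Yours is more conceptual, proves the stronger all-or-nothing dichotomy, replaces the analysis of $t^5 - c^5$ by the trivial fact $\alpha \neq 0$, and---a genuine merit---it makes explicit the gap between the formal substitution $\zeta \mapsto \zeta^2$ (with the $x_i$ held fixed) and an actual field automorphism; your scalar computation is precisely what justifies the paper's implicit identification of the formal conjugates $V_1V_4$ and $V_2V_4^2 + V_1^2V_3$ with the true conjugates $\bar{A}$ and $\bar{B}$, a point the paper elides throughout. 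The paper's argument, in exchange, needs the $D_5$ hypothesis only in its final sentence and explicitly exhibits what the degenerate locus looks like (roots of $t^5 - c^5$), which sits closer to the point-counting framework of Section~2, where one ultimately cares about all points of the variety in a box, not only those already known to arise from fields.
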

\begin{proof} Suppose that some $V_j$ is zero.
Since $\bar{A} \cdot A^2 = V_1 \cdot V_2^2 \cdot V_3^2 \cdot V_4$, it follows that
$\bar{A} \cdot A^2 = 0$, and hence
\[\nm^{\qq[\sqrt{5}]}_\qq \left( B^2 \right) = 5 \cdot C^2 \quad \Rightarrow \quad B = C = 0.\]
Using $B = 0$, we have
$V_1 V_2^2 \cdot V_3^2 V_4 = V_1 V_2^2 + V_3^2 V_4 = 0$,
so $V_1V_2^2 = V_3^2 V_4 = 0$.
Similarly, using $\bar{B} = 0$, we have 
$V_2 V_4^2 = V_1^2 V_3 = 0$.
Thus, all pairwise products $V_i V_j$ with $i \neq j$
are zero, so at most one $V_k$ is nonzero.
Solving for the $x_i$, we find $x_i = \zeta^{-ik} c$ for some
constant $c$. (It is easy to verify that this is a solution,
since $\sum \zeta^i = 0$; it is unique up to rescaling because
the transformation $(x_i) \mapsto (V_i)$ is given by a Vandermonde
matrix of rank $4$).
Hence, the minimal polynomial of $\alpha$
is $t^5 - c^5 = 0$, which is visibly not a $D_5$ extension.
\end{proof}

\begin{lm} \label{two} For fixed $(A, B, C)$, there are at most
two possibilities for the ordered quadruple
\[\left(V_1 V_2^2, V_3^2 V_4, V_2 V_4^2, V_1^2 V_3\right).\]
\end{lm}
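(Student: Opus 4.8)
The plan is to show that the data $(A,B,C)$ pins down each of the two unordered pairs $\{V_1 V_2^2,\, V_3^2 V_4\}$ and $\{V_2 V_4^2,\, V_1^2 V_3\}$ completely, and then that the one remaining binary choice --- the relative ordering of the two pairs --- is cut down to at most two by $C$. Write $P = V_1 V_2^2$, $Q = V_3^2 V_4$, $R = V_2 V_4^2$, and $S = V_1^2 V_3$, so that the goal is to bound the number of ordered quadruples $(P,Q,R,S)$. First I would record the effect of the automorphism $\sigma \in \gal(\qq[\zeta]/\qq)$ given by $\zeta \mapsto \zeta^2$: since $\sigma(V_j) = V_{2j}$ (indices mod $5$), one checks directly that $\sigma(P) = R$ and $\sigma(Q) = S$. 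Because $B$ and $\bar A \cdot A^2$ lie in $\oo_{\qq[\sqrt{5}]}$ and $\sigma$ restricts to the nontrivial automorphism of $\qq[\sqrt{5}]$ (indeed $\sigma$ sends $\sqrt{5}\mapsto -\sqrt{5}$, as used above), this yields the four symmetric relations
\begin{align*}
P + Q &= B, & PQ &= \bar A \cdot A^2, \\
R + S &= \bar B, & R S &= A \cdot \bar A^2,
\end{align*}
each side of which is determined by $(A,B)$ alone (as $\bar A$ and $\bar B$ are). Hence $\{P,Q\}$ is the root set of $t^2 - B t + \bar A A^2$ and $\{R,S\}$ is the root set of $t^2 - \bar B t + A \bar A^2$; each pair is determined as an unordered set, so the only surviving ambiguity is the ordering within each pair.

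Next I would observe that this ordering amounts to a choice of sign for $P - Q$ and for $R - S$. Their squares are already fixed by the data, since $(P-Q)^2 = B^2 - 4 \bar A A^2$ is the identity verified in the text and, applying $\sigma$, $(R-S)^2 = \bar B^2 - 4 A \bar A^2$; moreover their product is fixed, since $(P-Q)(R-S) = C\sqrt{5}$ is precisely the definition of $C$. Among the four a priori sign combinations, the product constraint retains only the two that differ by a simultaneous sign flip, so there are at most two ordered quadruples $(P,Q,R,S)$, as claimed.

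The one point requiring care --- and the only genuine obstacle --- is the degenerate case in which one of the differences vanishes, so that the product constraint carries no sign information. If $P = Q$ (equivalently $B^2 = 4\bar A A^2$), then that pair is rigid while $(P-Q)(R-S)=0$ forces $C = 0$, and the count collapses to the (at most two) orderings of $\{R,S\}$; the situation is symmetric if $R = S$, and if both coincide there is a single quadruple. Collecting the cases shows the bound of two holds unconditionally, completing the proof.
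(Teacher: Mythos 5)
Your proof is correct and follows essentially the same route as the paper's: both determine the unordered pairs $\{V_1V_2^2, V_3^2V_4\}$ and $\{V_2V_4^2, V_1^2V_3\}$ via their elementary symmetric functions ($B$, $\bar{A}A^2$ and $\bar{B}$, $A\bar{A}^2$ respectively), then use the relation $(V_1V_2^2 - V_3^2V_4)(V_2V_4^2 - V_1^2V_3) = C\sqrt{5}$ to tie the two ordering choices together, handling the degenerate equal-roots case separately. The only cosmetic difference is that you justify the symmetric functions of the second pair by explicitly applying the automorphism $\zeta \mapsto \zeta^2$, where the paper simply says ``similarly,'' and you phrase the final step as a sign count rather than as the first pair determining the second --- the underlying argument is identical.
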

\begin{proof} Since
$V_1 V_2^2 + V_3^2 V_4 = B$ and
$V_1 V_2^2 \cdot V_3^2 V_4 = \bar{A} \cdot A^2$
are determined, there are at most two possibilities
for the ordered pair
$(V_1 V_2^2, V_3^2 V_4)$.
Similarly, there at most two possibilities
for the ordered pair $(V_2 V_4^2, V_1^2 V_3)$;
thus if $V_1 V_2^2 = V_3^2 V_4$, then we are done.
Otherwise,
\[V_2 \cdot V_4^2 - V_1^2 \cdot V_3 = \frac{C\sqrt{5}}{V_1 \cdot V_2^2 - V_3^2 \cdot V_4}.\]
Since 
$V_2 V_4^2 + V_1^2 V_3 = \bar{B}$,
this shows that the ordered pair
$(V_1 V_2^2, V_3^2 V_4)$ determines
$(V_2 V_4^2, V_1^2 V_3)$. Hence there are at most
two possibilities our ordered quadruple.
\end{proof}

\begin{lm}  \label{ten} For fixed $(A, B, C)$, there are at most
ten possibilities for $(V_1, V_2, V_3, V_4)$.
\end{lm}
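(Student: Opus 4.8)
The plan is to upgrade Lemma~\ref{two} by showing that, once the ordered quadruple $(V_1 V_2^2,\, V_3^2 V_4,\, V_2 V_4^2,\, V_1^2 V_3)$ is fixed, this quadruple together with the known value $A = V_2 V_3$ pins down $(V_1, V_2, V_3, V_4)$ up to at most five choices. Since Lemma~\ref{two} already bounds the number of possibilities for the quadruple by two, multiplying gives the claimed bound $2 \cdot 5 = 10$. So after invoking Lemma~\ref{two}, I would fix a single admissible quadruple and count from there.

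The crux, and the step I expect to be the main obstacle, is to exhibit one coordinate as a pure fifth power of quantities already fixed; everything else is then bookkeeping. Concretely, I would search for an integer combination of the five known monomials $V_1 V_2^2$, $V_3^2 V_4$, $V_2 V_4^2$, $V_1^2 V_3$, and $A = V_2 V_3$ that collapses to a power of a single $V_j$. The combination
\[ (V_1 V_2^2)^2 \cdot (V_2 V_3) \cdot (V_1^2 V_3)^{-1} = V_2^5 \]
does this. By Lemma~\ref{nonzero} every $V_j$ is nonzero, so the inversion is legitimate, and since the left-hand side is determined, there are at most five possibilities for $V_2$, any two differing by a fifth root of unity.

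With $V_2$ chosen, I would recover the remaining coordinates one at a time, each uniquely determined by one of the fixed monomials: $V_1 = (V_1 V_2^2) / V_2^2$, then $V_3 = A / V_2$, and finally $V_4 = (V_3^2 V_4) / V_3^2$. Thus each of the (at most five) values of $V_2$ extends to at most one tuple $(V_1, V_2, V_3, V_4)$, giving at most five tuples per quadruple and hence at most ten in total, as desired.

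Two sanity checks confirm that the fifth-root ambiguity is the \emph{only} one hiding, which is the point to verify most carefully. Conceptually the number five is forced: the lattice in $\zz^4$ spanned by the exponent vectors of the five fixed monomials has index exactly five (equivalently, the gcd of the $4 \times 4$ minors of the corresponding exponent matrix is five), so the simultaneous solution set of the monomial equations is a coset of a cyclic group of order five. Moreover, the final count $2 \cdot 5 = 10 = |D_5|$ is exactly what one should expect, since $A$, $B$, $C$ are the $D_5$-invariants of the $V_j$: the cyclic subgroup $C_5$ acts on $(V_1, V_2, V_3, V_4)$ by multiplication by fifth roots of unity, accounting for the five, while the reflection accounts for the factor of two coming from Lemma~\ref{two}.
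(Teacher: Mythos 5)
Your proposal is correct and follows essentially the same route as the paper: after invoking Lemmas~\ref{two} and~\ref{nonzero}, you isolate a determined fifth power of one coordinate (you use $V_2^5 = (V_1V_2^2)^2(V_2V_3)/(V_1^2V_3)$, the paper uses $V_1^5 = V_1V_2^2\cdot(V_1^2V_3)^2/(V_2V_3)^2$) and then recover the remaining $V_j$ uniquely by a chain of divisions, giving $5 \cdot 2 = 10$. The choice of which $V_j$ to start with is immaterial, and your closing remark identifying the factor of five with the $C_5$-action and the factor of two with the reflection is exactly the structural reason the bound is sharp.
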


\begin{proof} In light of Lemmas~\ref{two} and~\ref{nonzero},
it suffices to show there at most five possibilities
for $(V_1, V_2, V_3, V_4)$ when we have fixed
nonzero values for
\[(V_1 V_4, V_2 V_3, V_1 V_2^2, V_3^2 V_4, V_2 V_4^2, V_1^2 V_3).\]
But this follows from the identities
\begin{align*}
V_1^5 &= \frac{V_1 V_2^2 \cdot (V_1^2 V_3)^2}{(V_2 V_3)^2} \quad V_3 = \frac{V_1^2 V_3}{V_1^2} \quad V_4 = \frac{V_3^2 V_4}{V_3^2} \quad V_2 = \frac{V_2 V_4^2}{V_4^2}. && \qedhere
\end{align*}
\end{proof}

This completes the proof of Theorem~\ref{mainthm},
because $|D_5| = 10$, so each $D_5$-quintic field corresponds
to ten ordered quadruples $(V_1, V_2, V_3, V_4)$, each of which
can be seen to correspond to the same triple $(A, B, C)$.
Thus, the triple $(A, B, C)$ uniquely determines the $D_5$-quintic field,
since otherwise we would have at least $20$ quadruples $(V_1, V_2, V_3, V_4)$
corresponding to $(A, B, C)$, contradicting Lemma~\ref{ten}.

\section{The Quadratic Subfield}

\begin{prop} Suppose that $K$ is a $D_5$-quintic field
corresponding to a triple $(A, B, C)$ with $C \neq 0$.
Then the composite of $\qq[\sqrt{5}]$
with the unique quadratic subfield $F \subset \kgal$
is generated by adjoining to $\qq[\sqrt{5}]$ the square root of
\[(2\sqrt{5} - 10) \cdot (B^2 - 4 \cdot \bar{A} \cdot A^2).\]
\end{prop}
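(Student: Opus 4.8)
The plan is to pass to the field $L = \kgal \cdot \qq(\zeta)$, where $\zeta$ is a primitive fifth root of unity, and to identify the target field group-theoretically. We may assume $F \neq \qq(\sqrt 5)$ (otherwise $M := \qq(\sqrt5) \cdot F = \qq(\sqrt 5)$ and the statement degenerates to the assertion that $(2\sqrt5 - 10)(B^2 - 4\bar A A^2)$ is a square in $\qq(\sqrt5)$). Since the only subfields of $\kgal$ Galois over $\qq$ are $\qq$, $F$, $\kgal$, this assumption gives $\kgal \cap \qq(\zeta) = \qq$ and $\gal(L/\qq) \cong D_5 \times \gal(\qq(\zeta)/\qq)$. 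I will write $\sigma, \tau$ for the generators of $D_5 = \gal(L/\qq(\zeta))$ acting by $V_j \mapsto \zeta^j V_j$ and $V_j \mapsto V_{5-j}$, and $c$ for the generator $\zeta \mapsto \zeta^2$ of $\gal(L/\kgal)$, which acts by $V_j \mapsto V_{2j}$; note $c^2$ is complex conjugation and fixes $\sqrt 5$.

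First I would locate $M$ as a fixed field. One has $\gal(L/\qq(\sqrt5)) = \langle \sigma, \tau, c^2\rangle$ (order $20$) and $\gal(L/F) = \langle \sigma, c\rangle$ (order $20$), so $\gal(L/M) = \langle \sigma, \tau, c^2\rangle \cap \langle \sigma, c\rangle = \langle \sigma, c^2\rangle$, giving $[M:\qq] = 4$ with $\gal(M/\qq(\sqrt5))$ generated by the image of $\tau$. Thus it suffices to produce $w \in L$ with $\sigma(w) = w$, $c^2(w) = w$, $\tau(w) = -w$, and $w^2 \in \qq(\sqrt5)$; then $\qq(\sqrt5)(w) = M$ as soon as $w \neq 0$.

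Second I would exhibit $w$. Put $P = V_1 V_2^2 - V_3^2 V_4$, so that $P^2 = B^2 - 4\bar A A^2$ by the computation already recorded above. A direct check of the actions on the monomials $V_1 V_2^2$ and $V_3^2 V_4$ gives $\sigma(P) = P$ and $\tau(P) = -P$, while $c$ sends $P$ to $Q := V_2 V_4^2 - V_1^2 V_3$ and $Q$ to $-P$, so $c^2(P) = -P$. Hence $P$ behaves correctly under $\sigma, \tau$ but carries the wrong sign under $c^2$. To repair this I multiply by an element $u \in \qq(\zeta)$ that is $D_5$-invariant (so $\sigma(u) = \tau(u) = u$) and totally imaginary (so $c^2(u) = -u$) with $u^2 = 2\sqrt5 - 10$; such a $u$ exists inside $\qq(\zeta)$ because $2\sqrt5 - 10$ is totally negative and lies in the correct square class, via $2\sqrt5 - 10 = \tfrac{-5-\sqrt5}{2}(\sqrt5-1)^2$ together with $\tfrac{-5-\sqrt5}{2} = (\zeta - \zeta^{-1})^2$, which identifies $\qq(\sqrt5)(\sqrt{2\sqrt5-10})$ with $\qq(\zeta)$. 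Setting $w = u P$ then yields $\sigma(w) = w$, $\tau(w) = -w$, $c^2(w) = (-u)(-P) = w$, and $w^2 = (2\sqrt5 - 10)(B^2 - 4\bar A A^2) \in \qq(\sqrt5)$. Since $C \neq 0$ forces $PQ = C\sqrt5 \neq 0$ and hence $P \neq 0$, we get $w \neq 0$, so the subgroup fixing $w$ is exactly $\langle \sigma, c^2\rangle$ and $\qq(\sqrt5)(w) = M$, which is the claim.

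The main obstacle I anticipate is the square-class bookkeeping in this last step: one must recognize that the naive generator $P^2 = B^2 - 4\bar A A^2$ is fixed by $\tau$ but negated by the complex conjugation $c^2$, so that $\qq(\sqrt5)(\sqrt{B^2 - 4\bar A A^2})$ is the \emph{wrong} quartic field, and that the precise totally-negative multiplier $2\sqrt5 - 10$ — whose square root generates $\qq(\zeta)$ over $\qq(\sqrt5)$ — is exactly what flips the $c^2$-sign while leaving the $\sigma$- and $\tau$-behavior intact. Pinning down this element of $\qq(\sqrt5)^\times/(\qq(\sqrt5)^\times)^2$ is the one genuinely computational point; everything else is a transcription of the $\sigma$-, $\tau$-, and $c$-actions on $V_1 V_2^2$ and $V_3^2 V_4$.
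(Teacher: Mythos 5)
In the case $F \neq \qq[\sqrt{5}]$ your argument is complete and is essentially the paper's own proof with the Galois-theoretic bookkeeping made explicit: the paper exhibits the square root directly as $2(\zeta - \zeta^{-1})\cdot(V_1V_2^2 - V_3^2V_4)$ and checks that the $D_5$-action on it is the sign representation while $\gal(\qq[\zeta]/\qq[\sqrt{5}])$ acts trivially. Your $w = uP$ is literally the same element, since $u = (\zeta-\zeta^{-1})(\sqrt{5}-1) = 2(\zeta+\zeta^{-1})(\zeta-\zeta^{-1}) = 2(\zeta^2 - \zeta^{-2})$, which is the paper's multiplier after the harmless relabeling $\zeta \mapsto \zeta^2$ (equivalently, the other choice of square root of $5$ inside $\qq[\zeta]$); your identification of $\gal(L/M) = \langle \sigma, c^2\rangle$ is the precise content of the paper's concluding ``hence,'' and your use of $C \neq 0$ via $PQ = C\sqrt{5}$ is the right way to get $w \neq 0$.

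The genuine gap is the case $F = \qq[\sqrt{5}]$, which you set aside with ``we may assume'' and never revisit. That case is not vacuous: $D_5$-quintic fields with quadratic resolvent $\qq[\sqrt{5}]$ do exist (by class field theory over $\qq[\sqrt{5}]$, e.g.\ from $5$-torsion in the minus part of ray class groups of conductor $q$ for suitable split primes $q \equiv 1 \pmod{5}$; Chebotarev gives a positive density of such $q$). For such fields the assertion you yourself isolate --- that $(2\sqrt{5}-10)(B^2 - 4\bar{A}A^2)$ is a square in $\qq[\sqrt{5}]$ --- is exactly as nontrivial as the rest of the proposition, and your machinery does not apply to it: there $\kgal \cap \qq(\zeta) = F$, so $\gal(L/\qq)$ is the proper fiber product $D_5 \times_{C_2} C_4$, and neither your $\tau$ (a reflection fixing $\qq(\zeta)$) nor your $c$ (an order-$4$ element fixing $\kgal$) exists as an automorphism of $L$. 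The case can be repaired with one extra observation: every element of $\gal(L/\qq[\sqrt{5}])$ fixes $F = \qq[\sqrt{5}]$ pointwise, hence restricts to a rotation on $\kgal$; thus $\gal(L/\qq[\sqrt{5}]) = \langle \sigma, \rho \rangle$, where $\rho$ is the nontrivial element of $\gal(L/\kgal)$, which acts by $\rho(\zeta) = \zeta^{-1}$, hence $\rho(V_j) = V_{5-j}$, $\rho(P) = -P$, $\rho(u) = -u$, so both generators fix $w$ and $w \in \qq[\sqrt{5}]$ as claimed. The danger your framework cannot see is the element restricting to a reflection on $\kgal$ and to conjugation on $\qq(\zeta)$: it fixes $P$, negates $u$, and so would negate $w$; it is excluded precisely because a reflection moves $F = \qq[\sqrt{5}]$ while conjugation fixes $\qq[\sqrt{5}]$, so the pair does not lie in the fiber product. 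Without this paragraph your proof establishes only part of the proposition.
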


\begin{proof}
Using the results of the previous section, we note that
\[\sqrt{(2\sqrt{5} - 10) \cdot (B^2 - 4 \cdot \bar{A} \cdot A^2 )} = 2 \cdot (\zeta - \zeta^{-1}) \cdot (V_1 \cdot V_2^2 - V_3^2 \cdot V_4).\]
By inspection, the $D_5$-action on the above expression
is by the sign representation, and the
action of $\gal(\qq[\zeta] / \qq[\sqrt{5}])$
is trivial.
Hence, adjoining the above quantity to $\qq[\sqrt{5}]$
generates the composite of $\qq[\sqrt{5}]$ with the quadratic subfield
$F$.
\end{proof}

\section{Discussion of Computational Results \label{sec:num}}

Numerical evidence indicates that the number of triples
$(A, B, C)$ satisfying the conditions of Theorem~\ref{mainthm} is $O(X^{\frac{2}{3}+\alpha})$ for a small number $\alpha$
(in particular, much less than $O(X^{\frac{3}{4}})$).
More precisely, we have the
following table of results. The computation took approximately four hours on a 3.3 GHz CPU, using the program available
at \url{http://web.mit.edu/~elarson3/www/d5-count.py}.

\smallskip

\begin{center}
\begin{tabular}{c|c}
$X$ & $\#(A, B, C)$ \\ \hline
10 & 3 \\
31 & 3 \\
100 & 7 \\
316 & 55 \\
1000 & 127 \\
3162 & 397 \\
10000 & 951 \\
31622 & 2143 \\
100000 & 5145 \\
316227 & 11385 \\
1000000 & 25807 \\
3162277 & 57079 \\
\end{tabular}
\end{center}

\smallskip

The following log plot shows that after the first few data points, the least
squares best fit
to the last four data points given by
$y = 0.698 x + 0.506$
with slope a little more than $\frac{2}{3}$ is quite close.

\smallskip

\begin{center}
\includegraphics[width=0.5\textwidth]{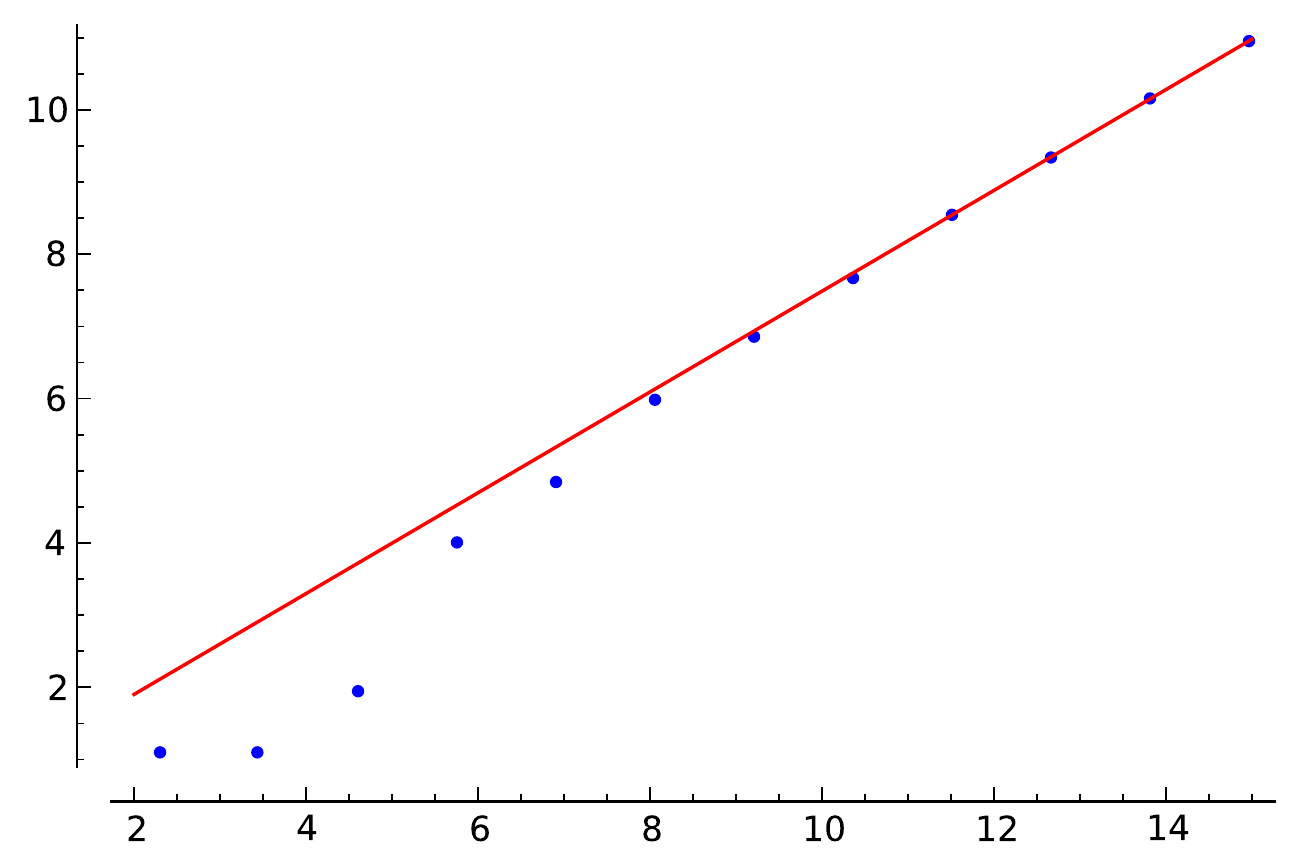}
\end{center}

\smallskip

\end{document}